\documentclass[12pt,reqno]{amsart}

\usepackage[utf8]{inputenc}
\usepackage[T1]{fontenc}
\usepackage{amsmath, amssymb, amsfonts, amsthm}
\usepackage{mathrsfs}
\usepackage{enumitem}
\usepackage{hyperref}
\usepackage{graphicx}
\usepackage[
paper=a4paper,
left=27mm,
right=27mm,
top=30mm,
bottom=30mm
]{geometry}
\usepackage{tikz}
\usetikzlibrary{decorations.markings, arrows.meta, calc}
\hypersetup{
	colorlinks = true,
	linkcolor  = blue,
	citecolor  = blue,
	urlcolor   = blue
}

\theoremstyle{plain}
\newtheorem{thm}{Theorem}[section]

\newtheorem{lem}[thm]{Lemma}

\theoremstyle{definition}

\theoremstyle{remark}

\title{Regular automorphisms of periodic $p'$-groups need not be splitting}

\author{V.\,S.~Atabekyan}
\address{Department of Mathematics, Yerevan State University, Armenia}
\email{avarujan@ysu.am}

\date{\today}

\subjclass[2020]{20F50, 20E36, 20F06, 20E06} 
\keywords{splitting automorphism; fixed-point-free automorphism; periodic group, free product, $p$-group}

\begin{document}

	\maketitle
	
	\begin{abstract}
		
		For a fixed prime $p$, a group is called a $p'$-group if the orders of all its elements are coprime to $p$. It is well known that in a finite $p'$-group an automorphism of order $p$ is regular if and only if it is a splitting automorphism of period $p$. In the Kourovka Notebook, V.\,D.~Mazurov asked whether these two notions coincide for periodic $p'$-groups. We answer this question in the negative by constructing a periodic group of prime period $q$ that admits a regular automorphism of order $p \neq q$ which is not splitting.
	\end{abstract}
	
	\section{Introduction}
	
		An automorphism $\phi$ of a group $G$ is called \textit{regular} (or \textit{fixed-point-free}) if $C_G(\phi) = \{1\}$. It is called \textit{splitting of period $p$} if $\phi^p = 1$ and $$g\,g^{\phi}g^{\phi^2}\cdots g^{\phi^{p-1}} = 1$$ for all $g \in G$. In the Kourovka Notebook \cite{KN} (Problem 17.71(b)), V.\,D.~Mazurov posed the following question: \textit{Let $\phi$ be an automorphism of order $p$ without nontrivial fixed points of a periodic group $G$ containing no nontrivial $p$-elements (that is, elements whose order is a power of the prime $p$). Must $\phi$ be splitting?}
	
	For finite groups the answer to Mazurov's question is affirmative: it is well known that every regular automorphism of order $p$ is a splitting automorphism of period $p$ (see, for example, \cite{DG}). Conversely, if $G$ has no nontrivial $p$-elements, then every splitting automorphism of order $p$ is automatically regular (see, for example, \cite[Lemma~7.1.3]{Kh}). In this paper we show that for infinite periodic groups a regular automorphism of order $p$ need not be splitting, even when the group has no nontrivial $p$-elements.
	
	Regular automorphisms have a strong influence on the structure of a group. For instance, by Thompson's theorem, a finite group admitting a regular automorphism of prime order is nilpotent \cite{Th}; more generally, a finite group admitting a fixed-point-free group of automorphisms is solvable \cite{R}.
	
	Clearly, if $G$ is a periodic group of period $p$, then every inner automorphism $i_a \in \mathrm{Inn}(G)$ ($i_a(b) = a^{-1}ba$) of order $p$ is splitting of period $p$. The converse does not hold in general, but it does for free Burnside groups $B(m,p)$ of prime period $p > 1001$. Recall that the free Burnside group $B(m,p)$ of period $p$ and rank $m$ is the quotient of the absolutely free group $F_m$ by the normal subgroup $F_m^p$ generated by all $p$th powers of elements of $F_m$. Answering a question of S.~Ivanov from \cite{KN}, it was proved in \cite{At14} that if the order of a splitting automorphism $\phi$ of $B(m,n)$ is a prime power (where $n \ge 1003$ is an odd period), then $\phi$ is inner (see also \cite{At13ms}). We also note that for every automorphism of order $2$ of an arbitrary nonabelian periodic group of odd period, nontrivial fixed points were constructed explicitly in \cite{AtAs19}.
	
	The negative answer to Mazurov's question follows from the next theorem.
	
	\begin{thm}\label{th}
		For any two distinct primes $p\ge3$ and $q>10^{10}$ there exists a periodic group $G$ with two generators $a,b$ and period $pq$ such that:
		\begin{enumerate}
			\item the generator $a$ has order $p$;
			\item the normal closure $\langle b\rangle^G$ of the element $b$ in $G$ is a periodic group of period $q$;
			\item the inner automorphism $i_{a}$ acts on the subgroup $\langle b\rangle^G$ without nontrivial fixed points and is not a splitting automorphism of $\langle b\rangle^G$.
		\end{enumerate}
	\end{thm}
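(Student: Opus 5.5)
We take the semidirect-product viewpoint. First build a group $H$ of period $q$ with an automorphism $\phi$ of order $p$ that acts on $H$ with only the trivial fixed point but is not splitting. Then put $G = H\rtimes\langle a\rangle$ with $a$ of order $p$ acting as $\phi$, and choose $b\in H$ so that $H=\langle b\rangle^G$. Items (1)–(3) then follow: $a$ has order $p$, $\langle b\rangle^G=H$ has period $q$, and $i_a|_H=\phi$. Finally, $G$ has period dividing $pq$, since $G/H\cong\mathbb Z_p$ and $H$ has period $q$.

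\emph{Construction of $H$.} Start with the free product $F=\langle b_0\rangle*\langle b_1\rangle*\cdots*\langle b_{p-1}\rangle$ of $p$ cyclic groups of order $q$. It carries the cyclic automorphism $\phi\colon b_i\mapsto b_{i+1}$ (indices mod $p$), which has order $p$. Concretely, $F$ is the normal closure of $b=b_0$ in $\langle b_0\rangle*\mathbb Z_p$. Take $H$ to be a quotient of $F$ by a $\phi$-invariant normal subgroup $N$, constructed Olshanskii/Adian–Novikov style with the exponent $q>10^{10}$. Two families of relators go into $N$:
\begin{enumerate}
\item all $q$-th powers, imposed in the usual graded way, with periods chosen $\phi$-invariantly (closing each family of relators under $\phi$);
\item for every element $g$ that would become fixed by $\phi$, the relator $g^{-1}g^{\phi}$ replaced by $g=1$.
\end{enumerate}
The cleanest way to do (2) is to work in the semidirect product $\langle b\rangle*\langle a\rangle$ with $a^p=1$, $b^q=1$. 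One then adds relations killing $C(a)$ in the normal closure of $b$, i.e. for each $h$ of the normal closure with $[h,a]$ small, a relation forcing $h=1$. Equivalently, one forms the quotient in which, for every $x$ in the normal closure, the element $x$ commuting with $a$ implies $x=1$. This is the standard ``graded small-cancellation over free products'' machinery (Olshanskii's theory of hyperbolic/graded presentations over free products, or Ivanov–Olshanskii). I would invoke it in the form: for large prime $q$, a periodic quotient $\bar G$ of $\mathbb Z_p*\mathbb Z_q$ exists in which the normal closure of $\mathbb Z_q$ has exponent $q$ and the image of $\mathbb Z_p$ acts freely by conjugation. Freeness is designed in by adding, at each rank, the relations $x=1$ whenever $x$ is a period word in the normal closure commuting with $a$. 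This is compatible with small cancellation because centralizers of $a$ in the intermediate hyperbolic groups are finite extensions of $\langle a\rangle$.

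\emph{Non-splitting.} We must show $b\,b^{a}\cdots b^{a^{p-1}}\neq1$, that is, $(ba^{-1})^p\ne 1$ up to conjugation. This says $ba^{-1}$ does not have order $p$. The key is to arrange that $ba^{-1}$ is not conjugate into $\langle a\rangle$ and that its image has order $q$ in $\bar G$. Indeed, $(ba^{-1})^p\in H$, and if it were trivial then $ba^{-1}$ would be a $p$-element; we will show every $p$-element of $\bar G$ is conjugate into $\langle a\rangle$. Since $ba^{-1}$ maps to $a^{-1}\neq 1$ in $G/H$, it would then be conjugate to a power of $a$. But the word $ba^{-1}$ is cyclically reduced of length $2$ in the free product, not conjugate into a factor. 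In graded small-cancellation quotients, the only elements of finite order not of the relator form are conjugate to factor elements, and $ba^{-1}$ is too short to be conjugate into $\langle a\rangle$ given the large-period relators. So $(ba^{-1})^p\ne1$, and $\phi$ is not splitting. Note also that $G$ has order-$pq$ elements, which is permitted.

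The main obstacle is the simultaneous control in the graded construction. We must impose exponent $q$ only on the normal closure of $b$, never on elements outside $H$ such as $ba^{-1}$; those receive relations $(x)^{pq}$ instead. At the same time we must kill the fixed points of $a$ and keep the small-cancellation conditions. I would handle this by taking relators $A^{q}$ for period words $A\in H$ and $A^{pq}$ otherwise, so the period of $G$ is $pq$. I would add the fixed-point killers as extra relators of "contiguity-controlled" shape, then verify the Olshanskii-type lemma that cyclic subgroups of $H$ are malnormal-ish relative to $a$. This gives $C_H(a)=1$, with $q>10^{10}$ supplying the needed numerical margins.
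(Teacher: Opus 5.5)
You are aiming at the right group, but the deep step, regularity, is only asserted. Any quotient of $\langle a\rangle_p\ast\langle b\rangle_q$ in which the normal closure of $b$ has exponent $q$ is a quotient of the paper's $F/N$, because $\gamma(x^q)=\gamma(x)^q$ is trivial exactly when $x\in\langle b\rangle^F$. Without your extra relators you get precisely $F/N$. The ``fixed-point killers'' $h=1$ are not relators the graded machinery controls:
they are not powers of periods, so adding them can cause further collapse and create new fixed points at later ranks; their images in $\langle a\rangle_p\times\langle b\rangle_q$ are uncontrolled; and your claim that centralizers of $a$ in the intermediate groups are finite extensions of $\langle a\rangle$ is neither proved nor sufficient. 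The missing idea is that no such relators are needed. By the centralizer theorem for strictly Burnside $q$-products of groups without involutions (Ivanov; Ol'shanskii, \S36), every nontrivial element of $\langle a\rangle$ has centralizer exactly $\langle a\rangle$ in $G=F/N$. Since $\langle a\rangle\cap\langle b\rangle^G=1$ (use $G\to\langle a\rangle_p$), $a$ fixes no nontrivial element of $\langle b\rangle^G$.

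Your non-splitting argument rests on a claim that cannot hold in any group with the desired properties: that every $p$-element of $\bar G$ is conjugate into $\langle a\rangle$. Suppose $C_H(a)=1$, take $g\in H$, and put $x=ga^{-1}$. Then $x^p\in H$ and $x\notin H$, so $x$ has order $p$ or $pq$. If the order is $pq$, then $y=x^q$ has order $p$. If $y=ta^kt^{-1}$ with $a^k\neq1$, then $t^{-1}x^pt$ is a nontrivial element of $H$ commuting with $a^k$, hence with $a$, a contradiction. So your claim together with regularity would force $g\,g^{\phi}\cdots g^{\phi^{p-1}}=(ga^{-1})^p=1$ for all $g\in H$, i.e.\ $\phi$ would be splitting. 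This is exactly the finite-group mechanism (conjugacy of complements). In the actual counterexample, $(ba^{-1})^q$ is an element of order $p$ not conjugate into $\langle a\rangle$: its centralizer contains $ba^{-1}$, which has order $pq$. The correct argument is elementary. All relators lie in the Cartesian subgroup $C$, so $G$ maps onto $\langle a\rangle_p\times\langle b\rangle_q$, where $(ba^{-1})^p\mapsto b^p\neq1$. Your killers, with uncontrolled $b$-exponent sums, could destroy even this map.
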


	The group in Theorem \ref{th} is constructed in Section \ref{p2}. Sections \ref{p3}, \ref{p4} and \ref{p5} establish the properties of this group that we need, and the proof of Theorem \ref{th} is completed in Section \ref{p6}.
	
	\section{Construction of the periodic group}\label{p2}
	In this section we construct a periodic group satisfying the conditions of Theorem \ref{th}, where $q$ is a sufficiently large prime, say $q>10^{10}$, and $p\ge3$ is an arbitrary prime distinct from $q$. The group is built using the strictly Burnside $q$-product of groups introduced by S.~Ivanov \cite{I}. This construction is also presented in Ol'shanskii's monograph \cite[Ch.~11, \S36, item~2]{O}, where the strictly Burnside $q$-product of a family of groups $G_\mu$, $\mu\in I$, is denoted by $\underset{\mu\in I}{\bigcirc}^q G_\mu$. For the reader's convenience, we state the definition of the product $\bigcirc^q$ for the family we need, namely the strictly Burnside $q$-product of two cyclic groups of orders $p$ and $q$. We follow the notation of \cite[Ch.~11, \S36, item~2]{O}.
	
	Let $F=\langle a\rangle_p\ast \langle b\rangle_q$ be the free product of the cyclic group $\langle a\rangle_p$ of order $p$ generated by $a$ and the cyclic group $\langle b\rangle_q$ of order $q$ generated by $b$, and let $C$ be the Cartesian subgroup of $F$. Every nonidentity element of $F$ has a uniquely determined normal form, which is a word over the alphabet $(\langle a\rangle_p\setminus\{1\})\cup (\langle b\rangle_q\setminus\{1\})$. Let $N\subset F$ be the subgroup generated by all words $x^q$ over this alphabet that represent an element of $C$. In other words, $x^q\in N$ if and only if
	$x^q\in \mathrm{Ker}(\gamma)$, where
	\begin{equation}\label{gamma}
		\gamma:F=\langle a\rangle_p\ast \langle b\rangle_q\rightarrow \langle a\rangle_p\times \langle b\rangle_q=F/C
	\end{equation}
	is the canonical homomorphism from the free product onto the direct product of $\langle a\rangle_p$ and $\langle b\rangle_q$. By definition, $\langle a\rangle_p\bigcirc^q \langle b\rangle_q$ is the quotient group $F/N$: $$\langle a\rangle_p\bigcirc^q \langle b\rangle_q=F/N.$$
	Thus
	\begin{equation}\label{N}
		N=\langle x^q\mid x\in F; x^q\in C\rangle\subset C\cap F^q,
	\end{equation} where $F^q=\langle x^q\mid x\in F\rangle$, and we have the sequence of natural epimorphisms
	\begin{equation}\label{alphabeta}
		F\xrightarrow{\alpha}F/N\xrightarrow{\beta}F/C,
	\end{equation}
	that is,
	$$\langle a\rangle_p\ast \langle b\rangle_q\xrightarrow{\alpha}\langle a\rangle_p\bigcirc^q \langle b\rangle_q \xrightarrow{\beta}\langle a\rangle_p\times \langle b\rangle_q,$$
	with $\beta\circ\alpha=\gamma$.
	\section{Some properties of the group $\langle a\rangle_p\bigcirc^q \langle b\rangle_q$}\label{p3}
	Using the identity $yx=xyy^{-1}x^{-1}yx=xy[y,x]$, every element of $F$ can be written uniquely in the form $a^kb^sc$ with $0\le k<p$, $0\le s<q$ and $c\in C$.
	\begin{lem}\label{a,b}
		The element $a$ has order $p$ in ${F/N}$, and the element $b$ has order $q$ in ${F/N}$.
	\end{lem}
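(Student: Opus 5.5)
The plan is to push $a$ and $b$ all the way down to the direct product $F/C$ and read off their orders there, using the chain of epimorphisms \eqref{alphabeta} and $\gamma=\beta\circ\alpha$.

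First, I get upper bounds directly in $F/N$. We have $a^p=1$ and $b^q=1$ already in $F=\langle a\rangle_p\ast\langle b\rangle_q$. Hence $\alpha(a)^p=1$ and $\alpha(b)^q=1$ in $F/N$, so the order of $\alpha(a)$ divides $p$ and the order of $\alpha(b)$ divides $q$.

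Second, I get lower bounds by applying $\beta$. Since $\beta\circ\alpha=\gamma$, the images of $\alpha(a)$ and $\alpha(b)$ under $\beta$ are $\gamma(a)=(a,1)$ and $\gamma(b)=(1,b)$ in $\langle a\rangle_p\times\langle b\rangle_q$. These have orders exactly $p$ and $q$. A homomorphic image has order dividing the order of the original element, so $p$ divides the order of $\alpha(a)$ and $q$ divides the order of $\alpha(b)$. Combining this with the first step gives orders exactly $p$ and $q$.

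The only point needing care is that $\beta$ is well defined, i.e.\ that $N\subseteq C=\ker\gamma$. This holds by the very definition of $N$ in \eqref{N}, whose generators $x^q$ are required to lie in $C$. So $\gamma$ factors through $F/N$, and there is no real obstacle. Equivalently, one could phrase the argument through the unique normal form $a^kb^sc$ with $c\in C$: the element $a^k$ with $0<k<p$ has $c=1$, $s=0$, $k\neq0$, so it is not in $C\supseteq N$; likewise for $b^s$ with $0<s<q$.
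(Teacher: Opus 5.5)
Your proof is correct and is the same as the paper's: both use that $a$ and $b$ already have orders $p$ and $q$ in $F$, and that their images under $\gamma=\beta\circ\alpha$ in $F/C$ have the same orders, which pins down the orders of $\alpha(a)$ and $\alpha(b)$. You merely make explicit the two divisibilities and the inclusion $N\subseteq C$ that the paper leaves implicit.
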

	\begin{proof}
		Since $a$ and $\gamma(a)$ (see \eqref{gamma}) both have order $p$ and $\beta\circ\alpha=\gamma$, the element $\alpha(a)$ also has order $p$; that is, $a$ has order $p$ in ${F/N}$. Likewise, since $b$ and $\gamma(b)$ have order $q$, so does $\alpha(b)$.
	\end{proof}
	\begin{lem}\label{pq}
		The group ${F/N}$ is a periodic group of period $pq$.
	\end{lem}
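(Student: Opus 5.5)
We show that every $g\in F/N$ satisfies $g^{pq}=1$. Since $pq$ is a product of two distinct primes and both $a$ (order $p$) and $b$ (order $q$) occur in $F/N$ by Lemma~\ref{a,b}, this gives period exactly $pq$.

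Take $x\in F$ and write $x=a^kb^sc$ with $c\in C$, as above. Then $\gamma(x)=(a^k,b^s)$ in $\langle a\rangle_p\times\langle b\rangle_q$, so $\gamma(x^p)=(1,b^{ps})$. Put $y=x^p$. Since $q$ is prime and $p\ne q$, $\gamma(y^q)=(1,b^{psq})=1$, hence $y^q\in C$. By \eqref{N}, $y^q=x^{pq}$ lies in $N$. The definition of $N$ requires $y^q$ to be a $q$-th power of an element of $F$ that lies in $C$, and $y$ is such an element. Therefore $\alpha(x)^{pq}=1$ for every $x\in F$. As $\alpha$ is surjective, $F/N$ has exponent dividing $pq$.

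For the sharp period, $\alpha(a)$ has order $p$ and $\alpha(b)$ has order $q$ by Lemma~\ref{a,b}. So the exponent is divisible by both $p$ and $q$, and hence equals $pq$.

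The only point needing care is the definition of $N$ via "words $x^q$ over the alphabet": it must be read as all elements $x^q$ with $x\in F$ and $x^q\in C$, as \eqref{N} states. With that reading there is no real obstacle; the key step is simply to raise $x$ to the power $p$ first, which kills the $a$-component in the abelianization.
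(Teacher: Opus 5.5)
Your proof is correct and is essentially the paper's argument: $x^{pq}=(x^p)^q$ lies in $C$ because $F/C=\langle a\rangle_p\times\langle b\rangle_q$ has exponent $pq$, so it lies in $N$ by \eqref{N}; your computation $\gamma(x^p)=(1,b^{ps})$ just unpacks that exponent fact, and the appeal to primality of $q$ there is unnecessary since $b^q=1$ suffices. Your sharpness step via Lemma~\ref{a,b} is valid, though the paper's proof of this lemma only shows that orders divide $pq$ and later cites Lemma~\ref{ab} for an element of order exactly $pq$.
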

	\begin{proof}
		Let $x\in F$ be arbitrary. Since $F/C=\langle a\rangle_p\times \langle b\rangle_q$ is an abelian group of exponent $pq$, we have $\gamma(x^{pq})=1$, that is, $x^{pq}\in C$. As $x^{pq}=(x^p)^q$, it follows from \eqref{N} that $x^{pq}\in N$. Hence the order of the image of $x$ in $F/N$ divides $pq$.
	\end{proof}
	\begin{lem}\label{ab}
		The element $ab$ has order $pq$ in ${F/N}$.
	\end{lem}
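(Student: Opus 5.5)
Since Lemma~\ref{pq} gives that the order of the image of $ab$ in $F/N$ divides $pq$, it suffices to show this order is neither $1$, $p$ nor $q$. Equivalently, $(ab)^p\notin N$ and $(ab)^q\notin N$. The cleanest route is to push everything down to the abelian quotient $F/C$ via the epimorphism $\beta$ in \eqref{alphabeta}, exactly as in the proof of Lemma~\ref{a,b}.

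Concretely, $\beta(\alpha(ab))=\gamma(ab)=(a,b)\in\langle a\rangle_p\times\langle b\rangle_q$. In the direct product of cyclic groups of coprime orders $p$ and $q$, the element $(a,b)$ has order $\mathrm{lcm}(p,q)=pq$. The order of an element is at least the order of its image under a homomorphism; more precisely, the image's order divides the original order. Hence the order of $\alpha(ab)$ is divisible by $pq$. Combined with the upper bound from Lemma~\ref{pq}, the order of $\alpha(ab)$ is exactly $pq$.

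There is essentially no obstacle here. The only point to state explicitly is the use of $p\neq q$, both primes, so that $\mathrm{lcm}(p,q)=pq$ and $\gamma(ab)$ has full order $pq$ in $F/C$. No information about the subtle structure of $N$ beyond $N\subset C$, which is built into the factorization $\beta\circ\alpha=\gamma$, is needed.
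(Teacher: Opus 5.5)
Your proof is correct and follows essentially the paper's route: both take the upper bound from Lemma~\ref{pq} and push $ab$ down to $F/C=\langle a\rangle_p\times\langle b\rangle_q$ via $\gamma=\beta\circ\alpha$, where $\gamma(ab)$ has order $pq$. The paper rules out $(ab)^p=1$ and $(ab)^q=1$ separately, whereas you use directly that the order of $\gamma(ab)$ divides the order of $\alpha(ab)$; this is the same argument in more compact form.
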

	\begin{proof} By Lemma \ref{pq}, the order of $ab$ in $F/N$ divides $pq$. We show that $(ab)^{p}\neq1$ and $(ab)^{q}\neq1$ in $F/N$. Indeed, if $(ab)^{p}=1$ in $F/N$, then $(ab)^{p}\in N\subset C$, and therefore
		$$1=\gamma\big((ab)^{p}\big)=\gamma(a)^p\gamma(b)^p=\gamma(b)^p$$
		in the abelian group $F/C=\langle a\rangle_p\times \langle b\rangle_q$. This contradicts the fact that $\gamma(b)$ has order $q\neq p$.
		
		Similarly, if $(ab)^{q}=1$ in $F/N$, then $(ab)^{q}\in C$, and hence $1=\gamma\big((ab)^{q}\big)=\gamma(a)^q\gamma(b)^q=\gamma(a)^q$, which contradicts the fact that $\gamma(a)$ has order $p\neq q$.
		
		Since $p$ and $q$ are distinct primes and the order of $ab$ divides $pq$, it follows that this order equals $pq$.
	\end{proof}
	\begin{lem}\label{acap}
		$\langle a\rangle\cap \langle b\rangle^{F/N}=\{1\}$, where $\langle b\rangle^{F/N}$ is the normal closure of the element $b$ in ${F/N}$.
	\end{lem}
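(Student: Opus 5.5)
The plan is to push everything down to the abelian quotient $F/C=\langle a\rangle_p\times\langle b\rangle_q$ through the epimorphism $\beta$ of \eqref{alphabeta}, exactly as in Lemmas \ref{a,b}--\ref{ab}.

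First, the normal closure $\langle b\rangle^{F/N}$ is generated by the conjugates $g^{-1}\alpha(b)g$, $g\in F/N$. Since $F/C$ is abelian, $\beta$ sends each such conjugate to $\beta(g)^{-1}\gamma(b)\beta(g)=\gamma(b)$. Hence
$$\beta\big(\langle b\rangle^{F/N}\big)\subseteq\langle\gamma(b)\rangle=\{1\}\times\langle b\rangle_q .$$

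Next, take an element $x=\alpha(a)^k$ of $\langle a\rangle$ with $0\le k<p$, and suppose $x\in\langle b\rangle^{F/N}$. Then
$$\beta(x)=\gamma(a)^k=(a^k,1)\in\langle a\rangle_p\times\{1\}.$$
By the first step, $\beta(x)$ also lies in $\{1\}\times\langle b\rangle_q$. The two factors of a direct product intersect trivially, so $\gamma(a)^k=1$. Because $\gamma(a)$ has order $p$, this forces $k\equiv0\pmod p$, and therefore $x=1$ in $F/N$.

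A slightly slicker phrasing of the same argument uses orders instead. Every element of $\langle b\rangle^{F/N}$ maps into a group of order $q$. A nontrivial element of $\langle a\rangle$ has order $p$ by Lemma \ref{a,b}, and its image under $\beta$ still has order $p$, since $\beta\circ\alpha=\gamma$ is injective on $\langle a\rangle_p$. An element of order $p$ cannot lie in a group of order $q$ when $p\ne q$.

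I expect no real obstacle here. The only point needing care is the observation that $\beta$ collapses all conjugates of $b$ to $\gamma(b)$, and this is immediate from the commutativity of $F/C$.
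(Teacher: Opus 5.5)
Your proof is correct and is essentially the paper's argument. The paper composes $\beta$ with the projection $\delta$ onto $\langle a\rangle_p$, notes that $\langle b\rangle^{F/N}\subseteq\ker(\delta\circ\beta)$, and observes that $\langle a\rangle$ meets this kernel trivially; this is exactly your observation that $\beta$ maps $\langle b\rangle^{F/N}$ into $\{1\}\times\langle b\rangle_q$ and maps $\langle a\rangle$ injectively into $\langle a\rangle_p\times\{1\}$.
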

	\begin{proof} Consider the sequence of canonical epimorphisms $$\langle a\rangle_p\bigcirc^q \langle b\rangle_q \xrightarrow{\beta}\langle a\rangle_p\times \langle b\rangle_q\xrightarrow{\delta}\langle a\rangle_p.$$
		On the one hand, $\delta(\beta(a))=a\neq1$, so $a\notin \mathrm{Ker}(\delta\circ\beta)$, and since $a$ has prime order $p$ in $F/N$ (Lemma \ref{a,b}), we get $\langle a\rangle\cap \mathrm{Ker}(\delta\circ\beta)=\{1\}$. On the other hand, $\delta(\beta(b))=1$, and hence $\langle b\rangle^{F/N}\subset \mathrm{Ker}(\delta\circ\beta)$. Therefore
		$$\langle a\rangle\cap \langle b\rangle^{F/N}\subset \langle a\rangle\cap \mathrm{Ker}(\delta\circ\beta)=\{1\}.$$
	\end{proof}
	\begin{lem}\label{bFN} The normal closure $\langle b\rangle^{F/N}$ of the element $b$ in the group ${F/N}$ is a periodic group of period $q$.
	\end{lem}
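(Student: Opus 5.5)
Every $x\in\langle b\rangle^{F/N}$ satisfies $x^q=1$, since $x^q$ lands in $C$ and is then killed by the definition of $N$; we also exhibit an element of order exactly $q$, so the period is $q$. The argument runs as follows.

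Take $g\in\langle b\rangle^{F/N}$ and choose a preimage $x\in F$ with $\alpha(x)=g$. Consider the composition $\delta\circ\beta\colon F/N\to\langle a\rangle_p$ from the proof of Lemma \ref{acap}. Since $\langle b\rangle^{F/N}\subset\mathrm{Ker}(\delta\circ\beta)$, we get $\delta(\gamma(x))=\delta(\beta(\alpha(x)))=1$. Hence $\gamma(x)\in\mathrm{Ker}\,\delta$, which is the subgroup $\langle b\rangle_q$ of $\langle a\rangle_p\times\langle b\rangle_q$. That subgroup has exponent $q$, so $\gamma(x^q)=\gamma(x)^q=1$, that is, $x^q\in C=\mathrm{Ker}(\gamma)$.

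By the definition \eqref{N} of $N$, any $q$th power $x^q$ with $x\in F$ and $x^q\in C$ belongs to $N$. Therefore $g^q=\alpha(x^q)=1$ in $F/N$. The only point needing care is that the preimage $x$ is arbitrary in $F$, not necessarily in the normal closure of $b$ in $F$. This is harmless: the definition of $N$ only requires $x^q\in C$, and we verified that condition from the image $g$ alone.

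Finally, $b\in\langle b\rangle^{F/N}$ has order exactly $q$ by Lemma \ref{a,b}. So the exponent of $\langle b\rangle^{F/N}$ is exactly $q$, and $\langle b\rangle^{F/N}$ is a periodic group of period $q$. None of these steps is a real obstacle; the argument is a direct unwinding of the definition of $N$.
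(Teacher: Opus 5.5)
Your proof is correct and is essentially the paper's argument. Both show that a preimage $x$ of an element of $\langle b\rangle^{F/N}$ satisfies $\gamma(x)\in\langle b\rangle_q$, so $x^q\in C$ and hence $x^q\in N$ by \eqref{N}; the only difference is that you read this off directly from $\langle b\rangle^{F/N}\subset\mathrm{Ker}(\delta\circ\beta)$, whereas the paper writes $x=a^kb^sc$ and uses Lemma~\ref{acap} (itself proved via $\mathrm{Ker}(\delta\circ\beta)$) to force $k=0$.
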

	\begin{proof}
		Let the word $x=a^kb^sc$ of $F$ represent an element of the normal closure $\langle b\rangle^{F/N}$, where $0\le k<p$, $0\le s<q$ and $c\in C$. Clearly $b^s, c\in \langle b\rangle^{F/N}$ (the Cartesian subgroup $C$ is generated by the commutators $[a^i,b^j]$, which lie in the normal closure of $b$). Hence the condition $a^kb^sc\in \langle b\rangle^{F/N}$ implies that $a^k\in \langle b\rangle^{F/N}$. By Lemma \ref{acap}, $a^k=1$ in $F/N$, and since $a$ has order $p$ in $F/N$ (Lemma \ref{a,b}) and $0\le k<p$, we obtain $k=0$, so $x=b^sc$. Then $\gamma(x^q)=\gamma(b)^{sq}=1$, that is, $x^q\in C$, and therefore $x^q\in N$ by \eqref{N}. Thus the order of $x$ in $F/N$ divides $q$.
	\end{proof}
	\section{Centralizers of elements of the product $\langle a\rangle_p\bigcirc^q \langle b\rangle_q$}\label{p4}
	Some properties of the strictly Burnside $n$-product $\underset{\mu\in I}{\bigcirc}^nG_\mu$ in the class of groups without involutions are analogous to those of S.~I.~Adian's $n$-periodic products established in \cite{A76} (see also \cite{AA}). Proving them requires a graded presentation and an inductive definition of $\underset{\mu\in I}{\bigcirc}^nG_\mu$ (see \cite[\S3]{I} or \cite[p.~401]{O}). To avoid cluttering the text, we omit this inductive definition and record only the following property, established in \cite[\S3]{I} and \cite[p.~403]{O}.
	
	\begin{lem}\label{IO} Let $n>10^{10}$ be odd, and let $G_\mu$, $\mu\in I$, be a family of groups without involutions. In the strictly Burnside $n$-product $\underset{\mu\in I}{\bigcirc}^nG_\mu$, the centralizer of any nontrivial element $X$ of $G_\mu$ lies in $G_\mu$.
	\end{lem}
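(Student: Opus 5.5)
The plan is to follow Ol'shanskii's graded-diagram method for the strictly Burnside $n$-product, since Lemma \ref{IO} is a statement about its inductive (graded) presentation.

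First I recall the presentation. $\underset{\mu\in I}{\bigcirc}^nG_\mu=\varinjlim G(i)$, where $G(0)=\ast_\mu G_\mu$, and $G(i)$ is obtained from $G(i-1)$ by adding relators $A^n$. Here $A$ runs over a chosen set of periods of rank $i$: words, cyclically reduced in the free-product sense, that are not conjugate in $G(i-1)$ to elements of the factors $G_\mu$ or to powers of shorter words, and whose $n$-th power lies in the Cartesian subgroup. For such presentations, with $n>10^{10}$ odd and the factors involution-free, Ivanov and Ol'shanskii establish the standard package for reduced diagrams over $G(i)$. This consists of the contiguity estimates between cells and sections of the contour, the consequence that a reduced disc or annular diagram with "smooth" contours of small length contains no cells of positive rank, and the description of elements of $G(i)$ as conjugates either of factor elements or of powers of periods. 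I would use this package as the toolkit, inducting on $i$ and passing to the direct limit at the end.

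The key step is to translate $XY=YX$, with $X\in G_\mu\setminus\{1\}$, into an annular diagram $\Delta$ over $G(i)$ for some $i$. Both contours of $\Delta$ are labeled by the one-letter word $X$ (an element of the factor alphabet), and a path joining the contours is labeled by a reduced word representing $Y$. I take $\Delta$ reduced, and with the minimal number of cells of positive rank among such diagrams.

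I claim $\Delta$ has no cells of positive rank. Suppose instead there is such a cell $\Pi$. Then by the contiguity lemma, either $\Pi$ has a contiguity subdiagram to one of the contours of degree bounded below by a fixed positive constant (say $>\varepsilon$), or a cell of rank $j$ is contiguous to itself or to another cell in a way that contradicts reducedness. The first alternative is impossible, because a contour of length $1$ cannot support a contiguity arc covering a fixed positive fraction of a period $A^n$ with $|A|\ge 2$ and $n>10^{10}$. The other alternatives are ruled out by reducedness and the minimality of $\Delta$. The corresponding step for disc diagrams is also what allows the annular diagram to be cut down.

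Consequently $Y$ commutes with $X$ already in the free product $G(0)=\ast_\mu G_\mu$. In a free product, the centralizer of a nontrivial element of a factor $G_\mu$ lies in $G_\mu$, as one sees from normal forms. Hence $Y\in G_\mu$.

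The main obstacle is the contiguity step: making rigorous that a cell of rank $i$ cannot be "pulled" into the annulus between two length-one contours. This needs the full machinery of the graded presentation, including the choice of constants, the treatment of $0$-cells coming from the factors (where the absence of involutions is used to exclude the degenerate $A\sim A^{-1}$ configurations), and the simultaneous induction on rank. For this step I would cite \cite[\S3]{I} and \cite[p.~403]{O} rather than reproduce it.
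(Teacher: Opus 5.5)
Your proposal takes essentially the paper's route: the paper gives no argument of its own for this lemma and simply quotes it from Ivanov (\S3) and Ol'shanskii (p.~403). Your annular-diagram outline is the standard argument behind that citation: no cell of positive rank fits between two contours of length one, so the conjugation already holds in $\ast_\mu G_\mu$, where the factors are self-centralizing. Like the paper, you defer the substantive contiguity step to those sources. One small caution: the relators you recall ($A^n$ only, with $A^n\in C$ and $A$ not a power of a shorter word) do not directly yield relations such as $(ab)^{pq}=1$ in $\langle a\rangle_p\bigcirc^q\langle b\rangle_q$, because there $(ab)^q\notin C$ and $(ab)^p$ is a proper power, so the actual graded presentation must accommodate such relators; this does not change the shape of your argument.
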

	Since $p\ge3$ and $q>10^{10}$ are odd primes, the groups $\langle a\rangle_p$ and $\langle b\rangle_q$ have no involutions, and Lemma \ref{IO} applies to $\langle a\rangle_p\bigcirc^q \langle b\rangle_q$. As every nontrivial element of $\langle a\rangle$ (respectively, $\langle b\rangle$) generates this cyclic group, we obtain
	\begin{lem}\label{1.2} In $\langle a\rangle_p\bigcirc^q \langle b\rangle_q$, the centralizer of every nontrivial element of $\langle a\rangle$ coincides with the cyclic group $\langle a\rangle$, and the centralizer of every nontrivial element of $\langle b\rangle$ coincides with the cyclic group $\langle b\rangle$.
	\end{lem}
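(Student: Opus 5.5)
The plan is to derive Lemma~\ref{1.2} directly from Lemma~\ref{IO}, applied to the two-member family $G_1=\langle a\rangle_p$ and $G_2=\langle b\rangle_q$ with $n=q$. For this we need two facts. First, $q>10^{10}$ is an odd prime, so $n=q$ satisfies the hypothesis on the exponent. Second, neither factor contains involutions: $p\ge 3$ and $q$ are odd primes, so the cyclic groups of orders $p$ and $q$ have odd order and no elements of order~$2$.

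We also need to identify the factors $G_\mu$ with their images in $F/N$, since Lemma~\ref{IO} speaks of the factors as subgroups of the product. By Lemma~\ref{a,b}, $\alpha(a)$ has order $p$ and $\alpha(b)$ has order $q$. Hence $\alpha$ restricts to isomorphisms $\langle a\rangle_p\to\langle \alpha(a)\rangle$ and $\langle b\rangle_q\to\langle\alpha(b)\rangle$. So the subgroups $\langle a\rangle$ and $\langle b\rangle$ of $F/N$ are exactly the canonical copies of the factors.

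Now let $X$ be a nontrivial element of $\langle a\rangle$. Lemma~\ref{IO} gives $C_{F/N}(X)\subseteq\langle a\rangle$. For the reverse inclusion, note that $\langle a\rangle$ is cyclic, hence abelian, so $\langle a\rangle\subseteq C_{F/N}(X)$. Therefore $C_{F/N}(X)=\langle a\rangle$.

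The argument for a nontrivial $X\in\langle b\rangle$ is identical and gives $C_{F/N}(X)=\langle b\rangle$. Because $p$ and $q$ are prime, each nontrivial element generates the whole cyclic group, so this common centralizer is the cyclic group generated by $X$ itself, which is the form stated.

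There is no genuine obstacle here. The only point needing care is the identification of the factors inside $F/N$, so that ``lies in $G_\mu$'' in Lemma~\ref{IO} means the subgroup $\langle a\rangle$ (respectively $\langle b\rangle$) of $F/N$; this is settled by Lemma~\ref{a,b}.
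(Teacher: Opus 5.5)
Your proposal is correct and follows essentially the same route as the paper: you apply Lemma~\ref{IO} with $n=q$ to the two odd-order, hence involution-free, cyclic factors to get $C_{F/N}(X)\subseteq\langle a\rangle$ (resp.\ $\langle b\rangle$), and the reverse inclusion holds because the cyclic factor generated by $X$ is abelian. Your explicit use of Lemma~\ref{a,b} to identify the factors with their images in $F/N$ is a detail the paper leaves implicit, but it does not change the argument.
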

	\section{The inner automorphism $i_a$ of the group $\langle a\rangle_p\bigcirc^q \langle b\rangle_q$}\label{p5}
	\begin{lem}\label{reg}
		The inner automorphism $i_a$ of the group ${F/N}$ is a regular automorphism of order $p$ of the subgroup $\langle b\rangle^{F/N}$.
	\end{lem}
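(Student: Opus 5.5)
Write $H=\langle b\rangle^{F/N}$. Since $H$ is normal in $F/N$, conjugation by $a$ restricts to an automorphism $i_a|_H$ of $H$. We must show three things: that this restriction has order exactly $p$, and that its only fixed point in $H$ is $1$.

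\emph{The order divides $p$.} By Lemma \ref{a,b}, $a^p=1$ in $F/N$. Hence $(i_a|_H)^p=i_{a^p}|_H$ is the identity.

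\emph{The restriction is not trivial.} Suppose $i_a$ acted trivially on $H$. Then $a$ would commute with $b\in H$, so $a\in C_{F/N}(b)$. By Lemma \ref{1.2}, $C_{F/N}(b)=\langle b\rangle\subseteq H$. Then $a\in\langle a\rangle\cap H=\{1\}$ by Lemma \ref{acap}, which contradicts Lemma \ref{a,b}. So $i_a|_H$ is a nontrivial automorphism whose order divides the prime $p$, and its order is exactly $p$.

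\emph{Regularity.} Let $h\in H$ satisfy $h^{a}=h$, and suppose $h\neq1$. Then $h$ commutes with $a$, so $h\in C_{F/N}(a)$. Lemma \ref{1.2} applied to the nontrivial element $a$ of $\langle a\rangle$ gives $C_{F/N}(a)=\langle a\rangle$. Hence $h\in\langle a\rangle\cap H$, and Lemma \ref{acap} forces $h=1$, a contradiction. So $C_H(i_a)=\{1\}$.

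The substantive input is the centralizer statement, Lemma \ref{1.2}, which comes from Ivanov's theory and is taken as given here. With it available, the remaining argument is routine. The one point to check is that the order is computed for $i_a$ restricted to $H$, not for $i_a$ on all of $F/N$; this is why the nontriviality step is phrased in terms of $H$, using $b\in H$.
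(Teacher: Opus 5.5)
Your proof is correct and follows the paper's argument: regularity comes from $C_{F/N}(a)=\langle a\rangle$ together with $\langle a\rangle\cap H=\{1\}$, and the order comes from $a^p=1$ plus the fact that $C_{F/N}(b)=\langle b\rangle$ prevents $a$ from centralizing $b$. The only cosmetic difference is that you exclude just the trivial restriction and then use that $p$ is prime, whereas the paper excludes each power $i_{a^k}$, $0<k<p$, directly using the same lemmas.
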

	\begin{proof}
		By Lemma \ref{1.2}, the centralizer of $a$ in $F/N=\langle a\rangle_p\bigcirc^q \langle b\rangle_q$ is the cyclic group $\langle a\rangle$. Since $\langle a\rangle\cap\langle b\rangle^{F/N}=\{1\}$ by Lemma \ref{acap}, we have $i_a(x)=a^{-1}xa\neq x$ for every nontrivial $x\in \langle b\rangle^{F/N}$.
		
		It remains to determine the order of the restriction of $i_a$ to $\langle b\rangle^{F/N}$. Since $a^p=1$, we have $i_a^p=i_{a^p}=1$. Suppose that $i_a^k=i_{a^k}$ acts trivially on $\langle b\rangle^{F/N}$ for some $0<k<p$. Then $a^k$ centralizes $b$, so $a^k\in\langle b\rangle$ by Lemma \ref{1.2}. Hence $a^k\in\langle a\rangle\cap\langle b\rangle^{F/N}=\{1\}$ (Lemma \ref{acap}), which is impossible because $a$ has order $p$ (Lemma \ref{a,b}). Therefore the restriction of $i_a$ to $\langle b\rangle^{F/N}$ has order exactly $p$.
	\end{proof}
	
	\begin{lem}\label{ns}
		The inner automorphism $i_a$ of the group ${F/N}$ is not a splitting automorphism of the subgroup $\langle b\rangle^{F/N}$.
	\end{lem}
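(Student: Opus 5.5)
The plan is to exhibit one explicit element $g\in\langle b\rangle^{F/N}$ for which the splitting identity fails, and $g=b$ is the natural choice. With $\phi=i_a$ we have $g^{\phi^j}=a^{-j}ba^{j}$. So the first step is to compute, in $F/N$, the product
$$b\,(a^{-1}ba)(a^{-2}ba^{2})\cdots(a^{-(p-1)}ba^{p-1}).$$

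The key observation is that this product telescopes. Between consecutive factors, $a^{j}a^{-(j+1)}=a^{-1}$, so the product equals
$$b\,a^{-1}b\,a^{-1}\cdots a^{-1}b\,a^{p-1}=(ba^{-1})^{p-1}\,b\,a^{p-1}.$$
Writing $b\,a^{p-1}=(ba^{-1})\,a^{p}$ and using $a^p=1$ (Lemma \ref{a,b}), the product becomes exactly $(ba^{-1})^{p}$. If one prefers the opposite convention $g^{\phi}=aga^{-1}$, the same telescoping gives $(ba)^p$, and the argument below applies verbatim.

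It remains to show that $(ba^{-1})^{p}\neq 1$ in $F/N$, arguing exactly as in the proof of Lemma \ref{ab}. Suppose $(ba^{-1})^p\in N$. Since $N\subset C$ by \eqref{N}, we get
$$1=\gamma\big((ba^{-1})^p\big)=\gamma(b)^p\gamma(a)^{-p}=\gamma(b)^p$$
in the abelian group $\langle a\rangle_p\times\langle b\rangle_q$. This is impossible, because $\gamma(b)$ has order $q$ and $q\nmid p$. Hence $b\cdot b^{\phi}\cdots b^{\phi^{p-1}}\neq 1$, so $i_a$ is not splitting of period $p$ on $\langle b\rangle^{F/N}$.

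I expect no real obstacle here. The only points needing care are the bookkeeping in the telescoping under the paper's conjugation convention $i_a(x)=a^{-1}xa$, and noting that $b\in\langle b\rangle^{F/N}$ trivially. Lemma \ref{reg} already shows the restriction of $i_a$ has order $p$, so the only way $i_a$ could be splitting of period $p$ is through the product identity, and that is what fails.
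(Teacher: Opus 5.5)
Your proof is correct and is essentially the paper's argument. The paper takes $g=b^{-1}$, so the telescoped product is $(b^{-1}a^{-1})^p=((ab)^{-1})^p$ and Lemma \ref{ab} applies directly; you take $g=b$ and rerun the $\gamma$ argument on $(ba^{-1})^p$ yourself, which is a cosmetic difference.
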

	\begin{proof}
		If $i_a$ were a splitting automorphism, then the identity
		\[
		g\,g^{i_a}g^{i_a^2}\cdots g^{i_a^{p-1}}=1
		\]
		would hold for every $g\in \langle b\rangle^{F/N}$. In particular, for $g=b^{-1}$ we would obtain
		\[
		{(b^{-1})}\,{(b^{-1})}^{i_a}{(b^{-1})}^{i_a^2}\cdots {(b^{-1})}^{i_a^{p-1}}=({b^{-1}}a^{-1})^pa^{p}=({b^{-1}}a^{-1})^p=1,
		\]
		which contradicts Lemma \ref{ab}, according to which the element ${b^{-1}}a^{-1}=(ab)^{-1}$ has order $pq$.
	\end{proof}
	
	\section{Proof of Theorem \ref{th}}\label{p6}
	
	Let $G={F/N}$; this group is generated by the images of $a$ and $b$. By Lemma \ref{pq}, $G$ is periodic of period $pq$, and by Lemma \ref{ab} it contains an element of order $pq$. By Lemma \ref{a,b}, the generator $a$ has order $p$, which proves item~1. By Lemma \ref{bFN}, the normal closure $\langle b\rangle^{G}$ is a periodic group of period $q$, which proves item~2. By Lemma \ref{reg}, the inner automorphism $i_a$ acts on $\langle b\rangle^{G}$ as a regular automorphism of order $p$, and by Lemma \ref{ns} it is not a splitting automorphism of $\langle b\rangle^{G}$; this proves item~3 and completes the proof of the theorem.
	
	Since $\langle b\rangle^{G}$ has period $q\neq p$, it contains no nontrivial $p$-elements. Thus the restriction of $i_a$ to $\langle b\rangle^{G}$ is a regular automorphism of order $p$ of a periodic $p'$-group that is not splitting, which gives a negative answer to Mazurov's question.

\end{document}